\newtheorem{theorem}{Theorem}[section]
\newtheorem{corollary}[theorem]{Corollary}
\newtheorem{lemma}[theorem]{Lemma}
\newtheorem{conjecture}[theorem]{\bf Conjecture}
\numberwithin{equation}{section}
\theoremstyle{remark}
\newtheorem{remark}[theorem]{Remark}
\newcommand{\Ind}{\operatorname{Ind}}
\newcommand{\Span}{\operatorname{Span}}
\newcommand{\Spec}{\operatorname{Spec}}
\newcommand{\dive}{\operatorname{div}}
\def\<{\langle}
\def\>{\rangle}
\def\SS{\mathbb S}
\def\SSS{\mathbb{S}^{n+1}}
\begin{document}
\title[Index of minimal hypersurfaces in $\SSS$ with $\lambda_1<n$]{On the index of minimal hypersurfaces in $\mathbb{S}^{n+1}$ with $\lambda_1<n$}

%{On Morse index of minimal hypersurfaces in a sphere via an eigenvalue-comparison approach}

\author{Hang Chen}
\address[Hang Chen]{School of Mathematics and Statistics, Northwestern Polytechnical University, Xi' an 710129, P. R. China \\ email: chenhang86@nwpu.edu.cn}
\thanks{Chen was supported by Natural Science Foundation of Shaanxi Province Grant (No.~2024JC-YBMS-011) and Shaanxi Fundamental Science Research Project for Mathematics and Physics (Grant No.~22JSQ005)}
\author{Peng Wang}
\address[Peng Wang]{School of Mathematics and Statistics, Key Laboratory of Analytical Mathematics and Applications (Ministry of Education), FJKLAMA, Fujian Normal University, 350117 Fuzhou, P.R. China \\ pengwang@fjnu.edu.cn, netwangpeng@163.com}
\thanks{
Wang was supported by the Projects 12371052 of NSFC}

\date{\today}
 
\begin{abstract}
In this paper, we prove that a closed minimal hypersurface in $\SSS$ with $\lambda_1<n$ has Morse index at least $n+4$, providing a partial answer to a conjecture of Perdomo. As a corollary, we re-obtain a partial proof of the famous Urbano Theorem for minimal tori in $\mathbb{S}^3$: a minimal torus in $\mathbb{S}^3$ has Morse index at least $5$, with equality holding if and only if it is congruent to the Clifford torus.
The proof is based on a comparison theorem between eigenvalues of two elliptic operators, which also provides us simpler new proofs of some known results on index estimates of both minimal and $r$-minimal hypersurfaces in a sphere.
\end{abstract}

\keywords {Minimal hypersurfaces; Morse index; first eigenvalue} % Calabi--Hopf differentials;

\subjclass[2020]{53A10, 53C42}

%\tableofcontents
\maketitle

\section{Introduction}

The study of minimal hypersurfaces in the unit sphere $\SSS$ play important roles in global differential geometry. The work \cite{Sim68} of Simons on the stability problems of minimal submanifolds made influential progress in several directions including submanifold theory and geometric partial differential geometry. In particular, he showed that any closed minimal hypersurface in $\SSS$ has  Morse index greater than or equal to $1$, with equality holding if and only if it is the totally geodesic great $n$-sphere.

In  \cite{Urb90}, Urbano proved that any non-totally-geodesic, closed, minimal surface in $\mathbb{S}^3$ has index at least $5$ and with equality holding if and only if it is the  Clifford torus (and hence has genus 1). This is the key characterization to pick up the Clifford torus among all minimal surfaces in $\SS^3$ in the proof of the Willmore conjecture   in $\mathbb{S}^3$ by Marques and Neves \cite{MN14a}. As they pointed out in \cite{MN14}, strong evidence for the analogous conjecture in higher codimension would be a generalization of Urbano's result to minimal $2$-tori in $\SSS$. Very recently, Kusner and the second author \cite{KW24} proved an Urbano-type theorem for minimal $T^2$ in $\mathbb{S}^4$.

For higher dimensional cases, El Soufi \cite{ESou93} showed that any non-totally-geodesic, closed, minimal hypersurface in $\SSS$ has index at least $n+3$, and the index of Clifford torus exactly equals to $n+3$, which partially extended Urbano's result \cite{Urb90} for $n=2$. In \cite{Per01}, Perdomo re-obtained this result and proposed naturally the following conjecture.
\begin{conjecture}[cf. \cite{Per01}]\label{conj-perdomo}
Let $x: M^n\rightarrow\SSS$ be a full, closed, minimal hypersurface in $\SSS$, $n\ge 3$. Then $\Ind(M)\geq n+3$, with equality holding if and only if $x(M)$ is congruent to one of the Clifford minimal hypersurfaces $\mathbb{S}^{m}(\sqrt{\frac{m}{n}})\times \mathbb{S}^{n-m}(\sqrt{\frac{n-m}{n}})$, $1\leq m\leq \left[\frac{n}{2}\right]$.
\end{conjecture}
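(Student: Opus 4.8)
The plan is to treat the inequality and the rigidity separately: the bound $\Ind(M)\ge n+3$ I would get directly by exhibiting an explicit negative space of the Jacobi operator, while the equality statement I would reduce, via the first-eigenvalue normalization $\lambda_1(M)=n$, to an isoparametric classification. Throughout, the second variation of area is governed by the Jacobi operator $L=\Delta+|A|^2+n$, where $A$ is the shape operator and $\Delta$ the Laplacian of $M$; the Morse index equals the number of positive eigenvalues of $L$, equivalently the maximal dimension of a subspace on which $Q(f,f)=-\int_M fLf$ is negative definite.

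For the bound I would test $Q$ on the $(n+3)$-dimensional space $W=\Span\{1,\ell_1,\dots,\ell_{n+2}\}$ spanned by the constant function and the restrictions $\ell_i=\langle x,e_i\rangle$ of the ambient coordinate functions. Minimality gives $\Delta\ell_i=-n\ell_i$, hence $L\ell_i=|A|^2\ell_i$ and $L1=|A|^2+n$, while $\int_M\ell_i=0$. A short computation then yields, for $f=c_0+\sum_i c_i\ell_i$,
\[
Q(f,f)=-c_0^2\,n\,\mathrm{Vol}(M)-\int_M|A|^2f^2\le 0,
\]
with equality forcing $c_0=0$ and $|A|^2f^2\equiv0$; since $M$ is non-totally-geodesic, $|A|^2$ is positive on a dense set, so $f\equiv0$. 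Fullness makes the $\ell_i$ independent and independent from the constant, so $Q$ is negative definite on $W$ and $\Ind(M)\ge n+3$.

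For rigidity, suppose $\Ind(M)=n+3$. The key auxiliary functions are the support functions of the Gauss map $N$, namely $u_b=\langle N,b\rangle$ for $b\in\mathbb{R}^{n+2}$. Using Codazzi and minimality one computes $\Delta u_b=-|A|^2u_b$, hence $Lu_b=n\,u_b$: the $u_b$ are genuine eigenfunctions of $L$ with the fixed positive eigenvalue $n$, so $\Span\{u_b\}$ lies in the negative space of $Q$. I would first reduce to $\lambda_1(M)=n$: Takahashi's theorem $\Delta x=-nx$ already gives $\lambda_1(M)\le n$, while the comparison theorem of this paper (the bound $\lambda_1<n\Rightarrow\Ind\ge n+4$) rules out $\lambda_1<n$ in the equality case, so $\lambda_1(M)=n$.

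The remaining and genuinely hard step is to show that a closed minimal hypersurface with $\lambda_1(M)=n$ and $\Ind(M)=n+3$ must be Clifford. The mechanism I would pursue is that the $(n+3)$-dimensional negative space cannot simultaneously accommodate the directions coming from $W$ and any support function $u_b\notin W$: producing one such independent negative direction would force $\Ind\ge n+4$. Equality should therefore force $\Span\{u_b\}\subseteq W$, i.e. the Gauss map $N$ to be a linear function of $x$; from $\bar\nabla_{e_i}N=-A e_i$ one then reads off that the associated constant matrix acts as $-A$ on $T_pM$, so the principal curvatures are constant, $M$ is isoparametric with $|A|^2\equiv n$, and the classical rigidity theorem of Chern–do Carmo–Kobayashi and Lawson identifies $M$ with one of the Clifford minimal hypersurfaces. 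The main obstacle is exactly this middle claim: the cross terms $Q(1,u_b)=-n\int_M u_b$ and $Q(\ell_i,u_b)=-n\int_M\ell_i u_b$ do not vanish a priori, so showing that an unabsorbed $u_b$ genuinely enlarges the negative space requires controlling these integrals — this is where the normalization $\lambda_1=n$ must be fed back in, and it is the crux (closely tied to Yau's $\lambda_1=n$ conjecture) that the full Perdomo conjecture leaves open beyond the partial answer established here.
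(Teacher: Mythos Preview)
The statement you are attempting to prove is \emph{Conjecture}~\ref{conj-perdomo}: the paper does not prove it and explicitly says ``So far this conjecture stays open.'' There is therefore no proof in the paper to compare your proposal against; what the paper contributes are partial results (Theorem~\ref{thm-main} under $\lambda_1<n$, and Theorem~\ref{thm-app1-min} under $S$ constant or $S\ge n$).

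Your argument for the inequality $\Ind(M)\ge n+3$ via the test space $W=\Span\{1,\ell_1,\dots,\ell_{n+2}\}$ is correct and is essentially the classical El~Soufi/Perdomo argument the paper cites. Your reduction in the equality case to $\lambda_1(M)=n$, by combining Takahashi with the paper's Theorem~\ref{thm-main}, is also legitimate.

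The genuine gap is exactly the one you flag yourself. Knowing that $W$ and $\Span\{u_b\}$ are each negative for $Q$ and that $\Ind(M)=n+3$ does \emph{not} let you conclude $\Span\{u_b\}\subseteq W$: two maximal negative subspaces of a quadratic form need not coincide, and the cross terms $Q(1,u_b)$, $Q(\ell_i,u_b)$ that you compute are precisely the obstruction to showing $W+\Span\{u_b\}$ is negative definite. The functions $1,\ell_i$ are not eigenfunctions of $J$ unless $S$ is constant, so you cannot pass to an orthogonal eigenspace picture either. In short, the step ``an unabsorbed $u_b$ genuinely enlarges the negative space'' is not a technicality to be cleaned up --- it is the open content of Perdomo's conjecture. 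Your proposal is thus an honest outline that reproduces the known inequality and correctly isolates where the problem lies, but it does not (and, at present, cannot) close the rigidity statement without additional hypotheses such as those in Theorem~\ref{thm-app1-min}.
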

Perdomo \cite{Per01} proved that the conjecture holds if $x$ has antipodal symmetries. And we refer to  \cite{GBD99, dBS09} or Theorem \ref{thm-app1-min} for some proofs of the conjecture under some extra assumptions. So far this conjecture stays open.

In this note, we proved Perdomo conjecture under the assumption
$\lambda_1<n$, which is slightly relevant to the famous Yau's conjecture that every closed embedded minimal hypersurface in $\SSS$ has $\lambda_1=n$. For simplicity here we denote $\lambda_0=0$.

\begin{theorem}\label{thm-main}
    Let $x: M^n\rightarrow\SSS  $ be a full minimal hypersurface. Let $E^{\Delta}_{\lambda_j}$ be the space of eigenfunctions of the Laplacian on $M$ with respect to the eigenvalue $\lambda_j$.
    Then
    \begin{equation*}
        \Ind(M)\geq \sum_{0\leq \lambda_j\leq n}\dim E^{\Delta}_{\lambda_j}.
    \end{equation*}
\end{theorem}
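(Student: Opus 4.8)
The plan is to produce, by hand, a subspace of smooth functions on $M$ of dimension $\sum_{0\le\lambda_j\le n}\dim E^{\Delta}_{\lambda_j}$ on which the second variation of the volume is negative definite, and then invoke the variational characterization of the Morse index. Recall that for a closed minimal hypersurface $x\colon M^n\to\SSS$ the Jacobi operator on normal variations is $J=-\Delta-\bigl(|A|^2+n\bigr)$ with $|A|^2=|\sff|^2$, and that $\Ind(M)$ equals the largest dimension of a linear subspace $V\subset C^\infty(M)$ on which
\[
Q(f)\;=\;\int_M\Bigl(|\nabla f|^2-\bigl(|A|^2+n\bigr)f^2\Bigr)\,\dd V
\]
is negative definite.

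First I would take $V:=\bigoplus_{0\le\lambda_j\le n}E^{\Delta}_{\lambda_j}$, which has exactly the claimed dimension. Writing $f=\sum_j f_j$ with $f_j\in E^{\Delta}_{\lambda_j}$ and using that eigenspaces for distinct eigenvalues are orthogonal both in $L^2$ and, after integrating by parts, in the Dirichlet inner product, one finds $\int_M|\nabla f|^2=\sum_j\lambda_j\|f_j\|_{L^2}^2\le n\sum_j\|f_j\|_{L^2}^2=n\|f\|_{L^2}^2$, hence
\[
Q(f)\;\le\;n\|f\|_{L^2}^2-\int_M\bigl(|A|^2+n\bigr)f^2\,\dd V\;=\;-\int_M|A|^2f^2\,\dd V\;\le\;0 .
\]

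The remaining, and only delicate, point is to upgrade this to strictness; I expect the borderline eigenvalue $\lambda_j=n$ to be where the real content sits. If $Q(f)=0$ then both displayed inequalities are equalities: $\sum_j(n-\lambda_j)\|f_j\|_{L^2}^2=0$ forces $f_j=0$ for every $j$ with $\lambda_j<n$, so $f\in E^{\Delta}_n$; and $\int_M|A|^2f^2=0$. If $f\not\equiv0$ this is impossible, because a nonzero solution of $\Delta f=-nf$ on a connected closed manifold cannot vanish on a nonempty open set (unique continuation; equivalently, a minimal hypersurface is real-analytic, hence so is $f$), whereas $|A|^2$ is continuous, nonnegative and $\not\equiv0$ since $M$ is full, hence not totally geodesic. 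Thus $\int_M|A|^2f^2>0$, a contradiction, so $f\equiv0$; $Q$ is negative definite on $V$ and $\Ind(M)\ge\dim V$.

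I would close by remarking that this is precisely the eigenvalue-comparison mechanism advertised in the introduction: since the potential $|A|^2+n$ dominates the constant $n$ pointwise, the min--max principle gives $\mu_k(J)\le\lambda_k-n$ for the ordered eigenvalues, with strict inequality as soon as $|A|^2\not\equiv0$, so $J$ has at least $\#\{k:\lambda_k\le n\}$ negative eigenvalues. Keeping the eigenvalue $\lambda_j=n$ in the count is exactly what makes the coordinate functions $x_1|_M,\dots,x_{n+2}|_M$ (eigenfunctions for $\lambda=n$ by Takahashi's theorem, linearly independent because $M$ is full) contribute, which already recovers El Soufi's bound $\Ind(M)\ge n+3$; running the same argument with $\Delta$ replaced by the linearised operator of the relevant variational problem should give the promised alternative proofs of the known minimal and $r$-minimal index estimates.
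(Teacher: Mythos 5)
Your argument is correct, and at its core it is the same mechanism as the paper's: you test the Jacobi form on $V=\bigoplus_{0\le\lambda_j\le n}E^{\Delta}_{\lambda_j}$ and exploit the pointwise bound $n+S\ge n$, which is exactly what the authors do, except that they package the Rayleigh-quotient computation into a general comparison theorem (their Theorem 2.2, with $L=\Delta$, $\hat L=J=\Delta+n+S$, $p\equiv1$) and then specialize. The genuine difference is how the borderline case is ruled out. The paper gets strictness from non-constancy of the potential: if $q=n+S$ is not constant, then $\int(q-\inf q)u^2>0$ pushes every Rayleigh quotient strictly below $0$ (and the constant-$S$ case has to be absorbed by part (2) of their comparison theorem, since then $\inf q=n+S>n$; their two-line proof of Theorem 3.1 quotes only part (1), so your version is actually more uniform here). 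You instead reduce equality in $Q(f)\le 0$ to a genuine $\lambda=n$ eigenfunction vanishing on the nonempty open set $\{S>0\}$ and kill it by unique continuation (legitimate: a minimal hypersurface of the real-analytic sphere is real-analytic, so such an $f$ is real-analytic; Aronszajn's theorem also suffices), using fullness only through $S\not\equiv0$. What each buys: your route is self-contained, handles constant and non-constant $S$ in one stroke, and needs no case distinction, at the cost of invoking unique continuation/analyticity; the paper's abstract comparison theorem avoids unique continuation for this theorem and, more importantly, is reusable verbatim with weights $p\ne1$ and operators $L_r$, which is how they re-derive the $r$-minimal index estimates in Section 4 — your closing remark that the same scheme should extend is right, but for $J_r$ one needs the weighted version ($p=S_r$) rather than a literal repetition.
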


\begin{corollary}
    Let $x: M^n\rightarrow\SSS  $ be a full minimal hypersurface with either $\lambda_1<n$ or $\dim E^{\Delta}_{\lambda_j=n}\geq n+3$. Then $\Ind(M)\geq n+4$.
\end{corollary}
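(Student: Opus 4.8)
The plan is to obtain the Corollary as an immediate consequence of Theorem~\ref{thm-main}, by exhibiting enough Laplace eigenfunctions with eigenvalue in $[0,n]$. The only geometric input needed is the classical fact that for a minimal immersion $x=(x_1,\dots,x_{n+2}):M^n\to\SSS\subset\mathbb{R}^{n+2}$ each coordinate function satisfies $\Delta x_a=-n x_a$ on $M$; hence whenever $x_a\not\equiv 0$ it is an eigenfunction of the Laplacian for the eigenvalue $n$. First I would use fullness: $x(M)$ is not contained in any linear hyperplane of $\mathbb{R}^{n+2}$, which is equivalent to saying that no nontrivial linear combination $\sum_a c_a x_a$ vanishes identically on $M$. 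Consequently $x_1,\dots,x_{n+2}$ are linearly independent elements of $E^{\Delta}_{n}$, so $n$ is an eigenvalue of the Laplacian and $\dim E^{\Delta}_{n}\geq n+2$.

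Next I would split into the two hypotheses. If $\lambda_1<n$, then — $M$ being connected, so that $\lambda_0=0$ is simple and $\lambda_1>0$ — the numbers $0=\lambda_0<\lambda_1<n$ are three distinct eigenvalues of the Laplacian lying in $[0,n]$, whence
\[
\sum_{0\le\lambda_j\le n}\dim E^{\Delta}_{\lambda_j}\ \geq\ \dim E^{\Delta}_{0}+\dim E^{\Delta}_{\lambda_1}+\dim E^{\Delta}_{n}\ \geq\ 1+1+(n+2)\ =\ n+4.
\]
If instead $\dim E^{\Delta}_{\lambda_j=n}\geq n+3$, then already
\[
\sum_{0\le\lambda_j\le n}\dim E^{\Delta}_{\lambda_j}\ \geq\ \dim E^{\Delta}_{0}+\dim E^{\Delta}_{n}\ \geq\ 1+(n+3)\ =\ n+4.
\]
In either case Theorem~\ref{thm-main} yields $\Ind(M)\geq n+4$, which is the assertion.

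I do not expect a genuine obstacle here: essentially the whole weight of the Corollary rests on Theorem~\ref{thm-main}. The only points that deserve a line of care are that fullness really does force the linear independence of the restricted coordinate functions on $M$ (so that the eigenspace at level $n$ has dimension at least $n+2$, and in particular that $n$ is attained as an eigenvalue at all), and that in the first case the eigenvalues $0$, $\lambda_1$, $n$ are three genuinely distinct real numbers — which is exactly why the hypothesis is the strict inequality $\lambda_1<n$ together with connectedness of $M$. Should one wish to allow disconnected $M$, one notes that a disconnected $M$ only enlarges $\dim E^{\Delta}_{0}$, so the same count goes through after a trivial bookkeeping adjustment.
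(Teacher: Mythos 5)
Your proposal is correct and is exactly the argument the paper intends (the corollary is stated without proof as an immediate consequence of Theorem~\ref{thm-main}): fullness and $\Delta x^i=-nx^i$ give $\dim E^{\Delta}_{n}\ge n+2$, and either hypothesis then pushes the eigenvalue count $\sum_{0\le\lambda_j\le n}\dim E^{\Delta}_{\lambda_j}$ up to $n+4$. No gaps; your cautionary remarks about connectedness and the strictness of $\lambda_1<n$ are exactly the right points to check.
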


Since the Clifford torus is the unique minimal torus in $\mathbb{S}^3$ with $\lambda_1=2$ \cite{MR86} (see also \cite{ESI00} for a classification of all minimal 2-tori in $\mathbb{S}^n$ with $\lambda_1=2$), we have the following new proof of the (partial) Urbano Theorem for minimal tori in $\mathbb{S}^3$.
\begin{corollary}[cf. \cite{Urb90}]
    Let $x: T^2\rightarrow\mathbb{S}^3$ be a minimal torus. Then $\Ind(M)\geq 5$ with equality if and only if $x(M)$ is congruent to the Clifford torus.
\end{corollary}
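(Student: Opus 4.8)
The plan is to derive this from Theorem~\ref{thm-main} and the Corollary above, combined with two classical facts: the Takahashi-type bound $\lambda_1\le n$ valid on every closed minimal submanifold of a unit sphere, and the Montiel--Ros classification \cite{MR86} of minimal tori in $\mathbb{S}^3$ attaining $\lambda_1=2$. First I would record the reduction: on a closed minimal submanifold $M^n$ of a unit sphere, each restriction of an ambient linear coordinate is a Laplace eigenfunction with eigenvalue $n$, and these functions are not all constant, so $\lambda_1\le n$; and a minimal torus $x:T^2\to\mathbb{S}^3$ is automatically full, since otherwise $x$ would be an immersion of a torus into a totally geodesic $\mathbb{S}^2$, hence a covering map $T^2\to\mathbb{S}^2$, which is impossible. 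Therefore, with $n=2$, exactly one of $\lambda_1<2$ or $\lambda_1=2$ occurs.

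Next I would split into these two cases. If $\lambda_1<2$, the Corollary above with $n=2$ immediately gives $\Ind(M)\ge 2+4=6>5$. If $\lambda_1=2$, then \cite{MR86} forces $x(T^2)$ to be congruent to the Clifford torus $\mathbb{S}^1(\tfrac{1}{\sqrt{2}})\times\mathbb{S}^1(\tfrac{1}{\sqrt{2}})$; this is a flat square torus whose Laplace spectrum starts $0<2<4<\cdots$, with first nonzero eigenvalue $2$ of multiplicity $4$ (the eigenspace being spanned by the four ambient linear coordinates). Theorem~\ref{thm-main} then yields
\[
\Ind(M)\ \ge\ \dim E^{\Delta}_{\lambda_0}+\dim E^{\Delta}_{\lambda_1=2}\ =\ 1+4\ =\ 5,
\]
which is sharp because the index of the Clifford torus equals $n+3=5$ by El Soufi \cite{ESou93} (or Urbano \cite{Urb90}). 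Hence $\Ind(M)\ge 5$ in all cases.

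Finally, for the rigidity statement: if $\Ind(M)=5$ then the case $\lambda_1<2$ is excluded (it forced $\Ind(M)\ge 6$), so $\lambda_1=2$, and \cite{MR86} gives that $x(M)$ is congruent to the Clifford torus; the converse is the sharpness just noted. The only place I expect to spend any effort is verifying $\dim E^{\Delta}_{\lambda_1=2}=4$ for the Clifford torus --- equivalently, that the flat-torus eigenvalue following $2$ is strictly larger than $2$ --- but this is an elementary lattice computation, so I do not foresee a real obstacle; all the analytic weight of the corollary already resides in Theorem~\ref{thm-main}.
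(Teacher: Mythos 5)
Your argument is correct and is essentially the proof the paper intends: combine the preceding corollary (giving $\Ind\ge 6$ when $\lambda_1<2$, with fullness and $\lambda_1\le 2$ from the coordinate eigenfunctions) with the Montiel--Ros uniqueness of the Clifford torus among minimal tori with $\lambda_1=2$ and its known index $5$. The extra verification that $\dim E^{\Delta}_{\lambda_1=2}=4$ for the Clifford torus is harmless but not needed, since the exact index $5$ already follows from Urbano/El Soufi.
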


\begin{remark}
    \
\begin{enumerate}
    \item Note that Urbano's original theorem holds for all minimal surfaces in $\SS^3$ of genus greater or equal to $1$. Here we can only prove it for the case of torus.

    \item Recall that by Kapouleas and Wiygul \cite{KW20}, the Lawson minimal surface $\xi_{g,1}$ in $\mathbb{S}^3$ has Morse index $2g+3$, where $g$ is the genus of $\xi_{g,1}$ and by the work of Choe-Soret \cite{CS09} we have $\lambda_1(\xi_{g,1})=2$.
    So the above corollary and our proofs can not apply for minimal surfaces of higher genus in $\mathbb{S}^3$. So it is only a partial proof of Urbano's Theorem.

    \item And we also note that Savo proved an important estimate via first Betti numbers \cite{Sav10}, which will derive  Urbano Theorem for surfaces of genus $\geq 5$. We also refer to \cite{AB22, ACS18} and references therein for some recent important progress on this direction.

    \item
    We also refer to \cite{Bre13, Eji83, ESI00, Kar21,TY13,TXY14,WW23} for some recent progress on the index estimate and Yau's conjecture for minimal surfaces or minimal hypersurfaces.
    \end{enumerate}
\end{remark}

Our proof  of Theorem \ref{thm-main} comes from a simple comparison theorem between  eigenvalues of two elliptic operators, which we refer to  Theorem \ref{thm-operator-comparison} of Section 2 for full details.  The Comparison Theorem \ref{thm-operator-comparison} can be also applied for index estimates of $r$-minimal hypersurfaces in $\SSS$, which provides a unified, new proof of some known results on this direction.

The paper is organized as follows. In Section 2 we will derive the comparison theorem between eigenvalues of two elliptic operator. Section 3 is devoted to the proof of Theorem \ref{thm-main} and we end the paper with a new proof of index estimates of $r$-minimal hypersurfaces in $\SSS$.

\section{Comparison of eigenvalues of two self-adjoint ellptic operators}
Let $M$ be a closed Riemannian manifold and let $L$ be a self-adjoint, elliptic operator acting on $C^\infty(M)$.
Given a positive smooth function $p$ on $M$.
We consider the weighted eigenvalue problem
\begin{equation}\label{eq-WEP}
    Lu=-\lambda pu.
\end{equation}
If there exists a real numbrer $\lambda$ such that \eqref{eq-WEP} holds for some nonzero function $u$,
then we call $\lambda$ is a $p$-weighted eigenvalue of $L$ and
$u$ is called a $p$-weighted eigenfunction corresponding to $\lambda$.
It is well known that (cf. \cite[Chapter III]{Ban80}) the $p$-weighted eigenvalues of $L$ form a discrete sequence $\{\lambda_j\}_{j=0}^{+\infty}$ satisfying $\lambda_1\le \lambda_2\leq \cdots\rightarrow+\infty$,
and we can find an orthonormal set (in $p$-weighted $L^2$ sense) of eigenfunctions $\{u_j\}_{j=0}^{+\infty}$ such that $u_j$ is corresponding to $\lambda_j$, precisely,
\begin{equation}
    \int_M u_i u_j p\,dv=0 \mbox{ for $i\neq j$}.
\end{equation}

Now set
\begin{gather}
    \Spec(L, p):=\{\lambda_j\mid 1\leq j\leq +\infty\};\\
    N^{L,p}_{<a}:=\#\{\lambda\in \Spec(L,p)\mid \lambda<a\}, \quad N^{L,p}_{\leq a}:=\#\{\lambda\in \Spec(L,p)\mid \lambda\leq a\},\label{eq-def-N}\\
    N^{L, p}_{=a}:=\#\{\lambda\in \Spec(L,p)\mid \lambda=a\}=N^{L,p}_{\leq a}-N^{L,p}_{< a},
\end{gather}
with multiplicity.

When $p\equiv 1$, \eqref{eq-WEP} becomes the usual eigenvalue problems; in this case, we omit $p$ in above notations and just write $\Spec(L), N^{L}_{<a}, N^{L}_{\le a}, N^{L}_{=a}$ for simplicity.

Now we state the comparison theorem as follows.
\begin{theorem}\label{thm-operator-comparison}
    Let $M$ be a closed Riemannian manifold.
Let  $L$ and $\hat{L}$  be two self-adjoint, elliptic operators on $M$, satisfying $\hat{L}=L+q$ for some smooth function $q\in C^\infty(M)$.
    Then for any real number $a\in \mathbb R$, and any positive function $p\in C^\infty(M)$, we have
\begin{enumerate}
    \item if $q/p$ is not constant, then
    \begin{equation}\label{eq-N-minf}
        N^{\hat{L},p}_{<a-a_0}\ge N^{L,p}_{\le a},
    \end{equation}
where $a_0:=\inf_M\frac{q}{p}$;
\item    if $q/p$ is constant, then
    \begin{equation}\label{eq-N-f-const}
        N^{\hat{L},p}_{\le a-q/p}=N^{L,p}_{\le  a} \mbox{ and } N^{\hat{L},p}_{<a-q/p}=N^{L,p}_{< a}.
    \end{equation}
\end{enumerate}

\end{theorem}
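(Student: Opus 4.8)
The plan is to use the variational (min-max) characterization of weighted eigenvalues. For a self-adjoint elliptic operator $L$ with weight $p$, the $k$-th eigenvalue is
\begin{equation*}
\lambda_k(L,p)=\min_{\substack{V\subset C^\infty(M)\\ \dim V=k+1}}\ \max_{0\neq u\in V}\ \frac{-\int_M u\,Lu\,dv}{\int_M u^2 p\,dv},
\end{equation*}
and there is a dual ``max-min'' formulation. The key algebraic observation is that $\hat L=L+q$ gives, for every $u$,
\begin{equation*}
\frac{-\int_M u\,\hat L u\,dv}{\int_M u^2 p\,dv}=\frac{-\int_M u\,Lu\,dv}{\int_M u^2 p\,dv}-\frac{\int_M q\,u^2\,dv}{\int_M p\,u^2\,dv},
\end{equation*}
and the last ratio is a weighted average of $q/p$ against the probability measure $p\,u^2\,dv/\int p\,u^2\,dv$, hence lies in the closed interval $[a_0,\ \sup_M(q/p)]$, with $a_0=\inf_M(q/p)$.

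First I would treat case (2): if $q/p\equiv c$ is constant, then the Rayleigh quotient for $\hat L$ is identically the Rayleigh quotient for $L$ minus $c$. Since the two operators have literally the same eigenfunctions and their eigenvalues are shifted by $c$, both equalities in \eqref{eq-N-f-const} are immediate from the definitions \eqref{eq-def-N}. This case is essentially bookkeeping.

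For case (1), set $m:=N^{L,p}_{\le a}$, so $L$ has (at least) $m$ weighted eigenvalues that are $\le a$; let $W=\Span\{u_0,\dots,u_{m-1}\}$ be the span of corresponding $p$-orthonormal eigenfunctions, a space of dimension $m$. For any $0\neq u\in W$ write $u=\sum_{j<m}c_ju_j$; then $-\int u\,Lu\,dv=\sum_{j<m}\lambda_j c_j^2\le a\sum_{j<m}c_j^2=a\int u^2 p\,dv$, so the $L$-Rayleigh quotient is $\le a$ on all of $W$. Using the inequality for the weighted average of $q/p$, for every $0\neq u\in W$,
\begin{equation*}
\frac{-\int_M u\,\hat L u\,dv}{\int_M u^2 p\,dv}=\frac{-\int_M u\,Lu\,dv}{\int_M u^2 p\,dv}-\frac{\int_M q\,u^2\,dv}{\int_M p\,u^2\,dv}\le a-a_0,
\end{equation*}
and the point is that this inequality is \emph{strict}: equality in the weighted-average bound forces $u^2$ to be supported where $q/p$ attains its infimum, which (since $q/p$ is non-constant and everything is smooth, so $\{q/p=a_0\}$ has empty interior unless it is everything) is impossible for a nonzero $u$. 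Hence $\max_{0\neq u\in W}\frac{-\int u\,\hat L u}{\int u^2 p}<a-a_0$. By the max-min characterization, $\lambda_{m-1}(\hat L,p)\le \max_{0\neq u\in W}(\cdots)<a-a_0$, so $\hat L$ has at least $m$ weighted eigenvalues strictly less than $a-a_0$, i.e.\ $N^{\hat L,p}_{<a-a_0}\ge m=N^{L,p}_{\le a}$, which is \eqref{eq-N-minf}.

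The main obstacle I expect is making the strictness argument fully rigorous: one must rule out that a nonzero smooth (indeed real-analytic-type, being an eigenfunction combination of an elliptic operator) function $u$ has $u^2$ concentrated on $\{q/p=a_0\}$. The clean way is: if equality held, then $(q/p-a_0)u^2\equiv 0$, so $u$ vanishes on the open set $\{q/p>a_0\}$, which is nonempty since $q/p$ is non-constant; but $u$ is a finite linear combination of eigenfunctions of the elliptic operator $L$, hence satisfies a (higher-order) elliptic equation and by unique continuation cannot vanish on a nonempty open set unless $u\equiv0$ — contradiction. A slightly more elementary alternative, sufficient here, is to note $u\in W$ solves $Lu=-\sum\lambda_j c_j p u_j$ which need not be a single elliptic equation; so I would instead argue directly that the continuous function $(q/p-a_0)u^2$ is nonnegative with zero integral only if it is identically zero, forcing $u\equiv 0$ on the nonempty open set where $q/p>a_0$, and then invoke unique continuation for the elliptic operator $L$ applied to the eigenfunctions, or simply restrict attention (as in the application) to cases where $W$ consists of honest Laplace eigenfunctions, for which unique continuation is classical. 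Apart from this subtlety the proof is a short exercise in the variational principle.
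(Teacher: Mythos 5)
Your proposal is correct and takes essentially the same route as the paper: test the $\hat L$-Rayleigh quotient on the span of the $p$-orthonormal eigenfunctions of $L$ with eigenvalue at most $a$, note that it drops below $a-a_0$ (strictly when $q/p$ is nonconstant), and invoke the min-max characterization; your case (2) is the same constant-shift observation, which the paper phrases by applying the inequality in both directions ($\hat L=L+q$ and $L=\hat L-q$). The only place you diverge is in justifying strictness, which the paper simply asserts via $\int_M(q/p-a_0)u^2p\,dv>0$: your parenthetical claim that the minimum set of a smooth nonconstant $q/p$ must have empty interior is false, but your unique-continuation route is the right repair, and it becomes elementary once you observe that equality can only occur when $Q^{L,p}(u)=a$, i.e.\ when $u\in V$ is a genuine eigenfunction satisfying $Lu=-a\,p\,u$, so classical second-order unique continuation on the connected manifold $M$ rules out such a $u$ vanishing on the nonempty open set $\{q/p>a_0\}$.
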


\begin{proof}
    For $\lambda_i\in \Spec(L,p)$, suppose $k\in \mathbb{Z}^{+}$ such that $\lambda_k\le a$ and $\lambda_{k+1}>a$.
    Denote
    \begin{equation}
        V=\Span\{u_1,\dots,u_k\}.
    \end{equation}
    Considering the Rayleigh quotient for the weighted eigenvalue problem \eqref{eq-WEP}
    \begin{equation}
        Q^{L,p}(u):=-\frac{\int_M u Lu \,dv}{\int_M u^2 p\, dv}, \quad u \in C^\infty(M)\backslash\{0\},
    \end{equation}
    we have
    \begin{equation}
        Q^{L,p}(u)\le a, \mbox{ for any } u\in  V\backslash\{0\}.
    \end{equation}
    Hence, for any $u\in V\backslash\{0\}$, the Rayleigh quotient for $\hat{L}$ with weight $p$ satisfies
    \begin{equation}\label{eq-2.8}
        \begin{aligned}
            Q^{\hat{L},p}(u) &= -\frac{\int_M u \hat{L}u \,dv}{\int_M u^2p\,dv}\\
            &=-\frac{\int_M u (L+q)u \,dv}{\int_M u^2 p\,dv}\\
            & =Q^{L,p}(u)-\frac{\int_M (q/p) u^2 p\,dv }{\int_M u^2p\,dv}\\
            & \le a -a_0,
        \end{aligned}
    \end{equation}
where $a_0=\inf_M\frac{q}{p}$.

    (1) If $q/p$ is not constant, then the last inequality in \eqref{eq-2.8} should become strict since
    \begin{equation*}
        \int_M (\frac{q}{p}-\inf \frac{q}{p}) u^2 p\,dv>0.
    \end{equation*}
    So we obtain
    \begin{equation}
        Q^{\hat{L},p}(u) <a -\inf \frac{q}{p}=a-a_0, \mbox{ for any } u\in  V\backslash\{0\},
    \end{equation}
    which implies \eqref{eq-N-minf}.

    (2) If $q/p$ is constant, then $q/p=\inf(q/p)=\sup (q/p)$ and it follows from \eqref{eq-2.8} that
    \begin{equation}
        N^{\hat{L},p}_{\le a- q/p}\ge N^{L,p}_{\le a}.
    \end{equation}
    On the other hand, \eqref{eq-2.8} for $L=\hat{L}-q$ implies
    \begin{equation}
        N^{L,p}_{\le a}=N^{L,p}_{\le (a-q/p)-\inf (-q/p)} \ge N^{\hat{L},p}_{\le a-q/p},
    \end{equation}
    hence, $N^{\hat{L},p}_{\le a- q/p}=  N^{L,p}_{\le a}$.
    Similarly, we obtain $N^{\hat{L},p}_{<a- q/p}=  N^{L,p}_{<a}$.
\end{proof}

\section{Index estimate of minimal hypersurfaces in a sphere}
In this section, we will use Theorem \ref{thm-operator-comparison} to estimate the index of minimal hypersurfaces.
Firstly, we recall some basic conceptions and facts about minimal hypersurfaces, which can be found in many related literatures, e.g., \cite{Sim68, CM11}.

Let $x: M^n\rightarrow \SSS$ be a minimal immersion of a closed Riemannian manifold $M$ into $\SSS$.
Denote by $\nu $ and $S$ the unit normal vector field and the squared norm of the second fundamental form of $M$, respectively.
It is well known that minimal hypersurfaces are critical points of the area functional,
precisely,
consider any normal variation $x_t$ of $x$ such that $x_0=x, \frac{\partial}{\partial t}\big|_{t=0}x_t=u \nu$
and let $\mathcal{A}(t)$ denote the area of $x_t$,
then $\mathcal{A}'(0)=0$.
Here $\nu$ denotes the unit normal vector field on $M$ and $u\in C^\infty(M)$ is a smooth function.
Furthermore, the {\em second variation formula} of
a minimal immersion $x: M^n\rightarrow \SSS$
is given by
\begin{equation}\label{index-form}
    \mathcal{A}''(0)=-\int_M uJu\,dv=:Q(u,u),
\end{equation}
where
\begin{equation}\label{eq-L}
    J:=\Delta+n+S
\end{equation}
is usually called the {\em (volume) Jacobi operator}.
Clearly, $J$ is a strongly elliptic, self-adjoint operator acting on $  C^\infty(M)$.

The {\em Morse index} of a minimal immersion $x: M^n\rightarrow \SSS$, denoted by $\Ind(M)$, is defined to be the index of the quadratic form $Q$ associated \eqref{index-form} with the second variation of $x$.
In other words, $\Ind(M)$ is the maximal dimension of a subspace of  $ C^\infty(M)$ on which $Q$ is negative definite, that is, the sum of the dimensions of the
eigenspaces of $J$ belonging to its negative eigenvalues.

\begin{proof}[Proof of Theorem \ref{thm-main}]
Set
\begin{equation*}
    L=\Delta,\ \hat{L}=\Delta+q,\ q=n+S,\ p\equiv 1 \mbox{ and } a=\inf q
\end{equation*}
in Theorem \ref{thm-operator-comparison}.  Since $a_0=\inf q\ge n$, we obtain by (1) of  Theorem \ref{thm-operator-comparison} that
$N^{\hat{L},p}_{<a-a_0}\ge N^{L,p}_{\le a}$, which yields Theorem  \ref{thm-main} directly.
\end{proof}

When $n\ge 3$, it has been proven that Conjecture \ref{conj-perdomo} holds true under some extra assumptions in \cite{GBD99, dBS09} respectively.
Here we can derive a new, unified proof of their results by use of Theorem \ref{thm-operator-comparison}.

\begin{theorem}[\cite{GBD99, dBS09}]\label{thm-app1-min}
    Let $x:M^n\rightarrow\SSS$ be a full, closed minimal hypersurface.
    Suppose one of the following conditions holds:
\begin{enumerate}[label=(\alph*)]
    \item  $S$ is constant;
    \item  $S\ge n$.
\end{enumerate}
Then Conjecture \ref{conj-perdomo} holds true. Moreover, if $x(M)$ is not congruent to one of the Clifford minimal hypersurfaces, then $\Ind(M)\ge 2n+5$.
\end{theorem}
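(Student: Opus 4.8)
The plan is to apply Theorem \ref{thm-operator-comparison} with the same choices as in the proof of Theorem \ref{thm-main}, namely $L=\Delta$, $\hat L=J=\Delta+q$ with $q=n+S$, and $p\equiv 1$, but now to exploit case (1) of that theorem more carefully together with the two extra hypotheses. In both cases we will also need to feed in a lower bound on $N^{L}_{\le a}$, i.e.\ on the number of Laplace eigenvalues (with multiplicity) that lie at most $a=\inf q=n+\inf S$; for this the natural input is the coordinate functions $x_1,\dots,x_{n+2}$ of the immersion, which satisfy $\Delta x_i=-n x_i$ (minimality), so $\lambda=n$ is always a Laplace eigenvalue of multiplicity at least $n+2$ unless the $x_i$ are linearly dependent — but fullness of $x$ forbids that, giving $\dim E^\Delta_n\ge n+2$, hence $N^{\Delta}_{\le n}\ge n+3$ (counting also $\lambda_0=0$).

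\emph{Case (b), $S\ge n$.} Then $a_0=\inf q=n+\inf S\ge 2n$, and $a=\inf q=a_0$, so case (1) of Theorem \ref{thm-operator-comparison} (applicable once we check $q/p=n+S$ is nonconstant — if $S$ is constant we are in case (a)) gives
\begin{equation*}
\Ind(M)\ \ge\ N^{J}_{<0}\ \ge\ N^{J,1}_{<a-a_0}\ \ge\ N^{\Delta}_{\le a}\ =\ N^{\Delta}_{\le n+\inf S}\ \ge\ N^{\Delta}_{\le 2n}.
\end{equation*}
Now I would argue that $N^{\Delta}_{\le 2n}\ge 2n+5$: beyond the constant $\lambda_0=0$ and the $(n+2)$-dimensional space $E^\Delta_n$ of coordinate functions, one produces further low eigenfunctions from the second-order spherical harmonics restricted to $M$ — the functions $x_ix_j$ — whose Laplacian on $M$ can be computed via Bochner/Simons-type identities and bounded above by $2n$ (using $S\ge n$ and minimality), contributing enough extra dimensions to reach $2n+5$; the totally geodesic case is excluded by fullness, and equality analysis pins down the Clifford hypersurfaces via the classification of minimal hypersurfaces with $S\equiv n$ (Lawson/Chern–do Carmo–Kobayashi).

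\emph{Case (a), $S$ constant.} Then $q=n+S$ is constant, so case (2) of Theorem \ref{thm-operator-comparison} applies and gives the exact identity $N^{J}_{<0}=N^{J}_{<q-q}=N^{\Delta}_{<q}=N^{\Delta}_{<n+S}$ and likewise $N^{J}_{\le 0}=N^{\Delta}_{\le n+S}$. By Chern–do Carmo–Kobayashi, a closed minimal hypersurface with $S$ constant has either $S=0$ (totally geodesic, excluded by fullness), or $S=n$ (Clifford), or $S\ge n$ strictly; the first nontrivial value with $S=n$ gives $\Ind(M)=N^{\Delta}_{\le n}=n+3$ with the eigenspace at $\lambda=n$ exactly $(n+2)$-dimensional — this is the equality case — while for the remaining pieces of the isoparametric classification $S$ takes larger constant values and the same counting of $\lambda\le n+S$ eigenfunctions (coordinates plus degree-two harmonics) forces $\Ind(M)\ge 2n+5$.

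The main obstacle is the same in both cases: obtaining the sharp lower bound $N^{\Delta}_{\le n+\inf S}\ge 2n+5$ when $M$ is not Clifford. Producing the coordinate functions (the ``$n+2$'' eigenfunctions at $\lambda=n$) is immediate, but squeezing out $n+3$ \emph{more} eigenvalues $\le n+\inf S$ requires a genuine argument — either the explicit degree-two harmonic test functions together with a Rayleigh-quotient estimate using $S\ge n$, or invoking the structural classification results (constant $S$, or $S\ge n$ forcing isoparametric/Clifford behavior) to reduce to finitely many model hypersurfaces whose spectra are known. Handling the equality case cleanly — showing $\Ind(M)=n+3$ forces congruence to a Clifford hypersurface — will likewise rely on the rigidity in those classification theorems rather than on Theorem \ref{thm-operator-comparison} alone.
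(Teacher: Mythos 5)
Your setup (applying Theorem \ref{thm-operator-comparison} with $L=\Delta$, $q=n+S$, $p\equiv 1$, and counting $1+(n+2)$ low Laplace eigenvalues from the constants and the coordinate functions) matches the paper, but the step that carries the whole theorem --- producing the extra $n+2$ negative directions when $M$ is not Clifford --- is missing, and the substitutes you propose do not work. The paper's key input is Lemma \ref{lem-Lap} applied to the \emph{normal} coordinates $\nu^i$, not to degree-two harmonics. In case (a) with $S$ constant and $S>n$, $\Delta\nu^i=-S\nu^i$ makes the $\nu^i$ genuine $\Delta$-eigenfunctions at level $S<n+S$, so $N^{J}_{<0}=N^{\Delta}_{<n+S}\ge 1+(n+2)+(n+2)=2n+5$ with no classification result needed. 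Your alternative for constant $S>n$ --- reducing to ``the isoparametric classification'' with known spectra --- is not available: constant $S$ does not imply $M$ is isoparametric (that is the Chern conjecture, open), so this branch of your argument has no foundation. In case (b) with $S\ge n$ nonconstant, the paper again uses $\nu^i$, now as eigenfunctions of $J$ itself: $J\nu^i=\Delta\nu^i+(n+S)\nu^i=n\nu^i$, i.e.\ eigenvalue $-n$ of $J$ with multiplicity $\ge n+2$, while the comparison theorem with $a=n$ gives $N^{J}_{<-\inf S}\ge N^{\Delta}_{\le n}\ge n+3$ and $-\inf S\le -n$, so the two counts occupy disjoint spectral ranges and add to $2n+5$.

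Your replacement for this --- restricting the quadratic harmonics $x_ix_j$ and claiming their Rayleigh quotients are $\le 2n$ so that $N^{\Delta}_{\le 2n}\ge 2n+5$ --- is an unproven (and dubious) claim, not a computation: one has
\begin{equation*}
\Delta_M(x_ix_j)=-(2n+2)\,x_ix_j+2\delta_{ij}-2\nu_i\nu_j,
\end{equation*}
so these are not eigenfunctions, their Rayleigh quotients involve the sign-indefinite term $\int x_ix_j\nu_i\nu_j$, and neither minimality nor $S\ge n$ visibly forces the bound $2n$; moreover min--max with such test functions would at best bound eigenvalues below roughly $2n+2$, not $2n$. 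As written, your case (b) only securely yields $N^{J}_{<0}\ge n+3$ (constants plus coordinates), which is not enough for the equality characterization (a nonconstant-$S$ hypersurface is not Clifford, so you must exclude index $n+3$) nor for the bound $2n+5$. The fix is exactly the paper's device: use the functions $\nu^i$, as $\Delta$-eigenfunctions when $S$ is constant and as $J$-eigenfunctions with eigenvalue $-n$ in general; the rigidity theorems of Lawson and Chern--do Carmo--Kobayashi are then only needed, as you correctly use them, to identify the constant-$S\le n$ case with the Clifford hypersurfaces of index $n+3$.
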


To begin with, we first recall the following well-known fact.
\begin{lemma}[cf. \cite{Sim68, Urb90}]\label{lem-Lap}
    For any minimal immersion $x: M^n\to \SSS$, let $x^i$ and $\nu^i$ be the coordinate functions of the position vector $x=(x^1,\cdots, x^{n+2})$ and the unit normal $\nu=(\nu^1,\cdots, \nu^{n+2})$ respectively. Then for each $i$, $1\le i\le n+2$, we have
\begin{equation}
    \begin{cases}
    \Delta x^i=-n x^i,\\
    \Delta \nu^i=-S \nu^i.
    \end{cases}
\end{equation}
\end{lemma}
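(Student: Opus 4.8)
The plan is to regard $x$ as an immersion of $M^n$ into Euclidean space through the chain $M^n\subset\SSS\subset\mathbb R^{n+2}$, and to compute both Laplacians by the Beltrami principle: for an $\mathbb R^{n+2}$-valued map, applying $\Delta$ componentwise amounts to taking the tangential trace of the ambient second derivative and reading off coordinates. Concretely, I would fix $p\in M$ and a local orthonormal frame $e_1,\dots,e_n$ of $TM$ that is geodesic at $p$ (so $\nabla^M_{e_i}e_j=0$ at $p$); then for a scalar $f$ one has $\Delta f(p)=\sum_i e_i(e_i(f))$, whence for a vector-valued map $F$ the $k$-th component satisfies $\Delta F^k(p)=\big\langle\sum_i\bar\nabla_{e_i}\bar\nabla_{e_i}F,\partial_k\big\rangle$, where $\bar\nabla$ is the flat connection on $\mathbb R^{n+2}$. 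The only geometric inputs needed are the Gauss formula for the nested immersion, the Weingarten equation, the Codazzi equation (available because $\SSS$ is a space form), and minimality.

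For the first identity I would start from $\bar\nabla_{e_i}x=e_i$ (the derivative of the position vector) and differentiate once more, $\bar\nabla_{e_i}\bar\nabla_{e_i}x=\bar\nabla_{e_i}e_i$. The Gauss formula for $M\subset\SSS\subset\mathbb R^{n+2}$ gives, at $p$,
\[
\bar\nabla_{e_i}e_j=\nabla^M_{e_i}e_j+h_{ij}\,\nu-\delta_{ij}\,x,
\]
where $h_{ij}=\langle \sff(e_i,e_j),\nu\rangle$ are the components of the second fundamental form of $M$ in $\SSS$ and $-\delta_{ij}x$ is the contribution of the second fundamental form of $\SSS$ in $\mathbb R^{n+2}$ (indeed $\bar\nabla_{e_i}x=e_i$ forces $\langle\bar\nabla_{e_i}e_j,x\rangle=-\delta_{ij}$). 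Summing over $i$, the tangential part vanishes at $p$, the $\nu$-part equals $\big(\sum_i h_{ii}\big)\nu=(\trace A)\,\nu=0$ by minimality, and the $x$-part is $-nx$; hence $\Delta x=-nx$, i.e. $\Delta x^i=-nx^i$.

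For the second identity I would first establish the Weingarten relation $\bar\nabla_{e_i}\nu=-\sum_j h_{ij}e_j$: differentiating $\langle\nu,\nu\rangle=1$ and $\langle\nu,x\rangle=0$ shows $\bar\nabla_{e_i}\nu$ has neither a $\nu$- nor an $x$-component, so it is tangent to $M$ and equals $-Ae_i$. Differentiating again and inserting the Gauss formula above yields, at $p$,
\[
\sum_i\bar\nabla_{e_i}\bar\nabla_{e_i}\nu=-\sum_{i,j}e_i(h_{ij})\,e_j-\Big(\sum_{i,j}h_{ij}^2\Big)\nu+\Big(\sum_i h_{ii}\Big)x.
\]
The $\nu$-coefficient is exactly $-S$, the $x$-coefficient is $\trace A=0$ by minimality, so projecting onto $\partial_k$ would give $\Delta\nu^k=-S\nu^k$ provided the tangential term drops out.

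The hard part — and the only step that is not bookkeeping — is showing that the tangential term vanishes. Here I would invoke the Codazzi equation: in the space form $\SSS$ the covariant derivative $\nabla h$ is totally symmetric, so $\sum_i\nabla_i h_{ij}=\nabla_j\big(\sum_i h_{ii}\big)=\nabla_j(\trace A)$, which is $0$ by minimality; since $e_i(h_{ij})=\nabla_i h_{ij}$ at the geodesic point $p$, the term $\sum_{i,j}e_i(h_{ij})e_j$ indeed vanishes, completing both identities. Two points deserve care: the sign convention for $\Delta$ must be the one making the coordinate functions eigenfunctions with eigenvalue $n$ (consistent with the normalization $\Delta u=-\lambda u$ used in Section 2), and minimality is used three times — to kill the $\nu$-part in the first identity, the $x$-part in the second, and, together with Codazzi, the tangential part in the second.
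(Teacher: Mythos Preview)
Your proof is correct and complete. The paper itself does not supply a proof of this lemma: it records the result as classical, with the ``cf.~\cite{Sim68, Urb90}'' attribution, and uses it as a black box. Your argument---computing $\Delta x$ and $\Delta\nu$ via a geodesic frame and the nested Gauss and Weingarten formulas, then killing the tangential term in $\Delta\nu$ by Codazzi symmetry plus minimality---is exactly the standard derivation one finds in those references, so there is nothing further to compare.
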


\begin{proof}[Proof of Theorem \ref{thm-app1-min}]
    We still take $L=\Delta$, $q=n+S$ and $p\equiv 1$ in Theorem \ref{thm-operator-comparison}.
    Since $x$ is full, we have
    \begin{equation*}
        N^{\Delta}_{\le n}\ge n+3.
    \end{equation*}

    (a) If $S$ is constant, then $S>0$ since $x$ is full.
    By taking $a=\inf q=n+S$ in Theorem \ref{thm-operator-comparison}, we have
    \begin{equation}
        N^J_{<0}= N^{\Delta}_{< n+S}\ge N^{\Delta}_{\le n}\ge n+3.
    \end{equation}

    If $S\le n$, then by the famous rigidity theorem (cf. \cite{Law69,CdCK70}), we have $x(M)$ is congruent to one of the Clifford minimal hypersurfaces and it has index  $n+3$.

    If $S>n$, then by Lemma \ref{lem-Lap}, we know that $n$ and $S$ are two different eigenvalues of $\Delta$ and both of them have multiplicities at least $n+2$. Since the first eigenvalue of $\Delta$ is $0$, we obtain
    \begin{equation}
        N^J_{<0}= N^{\Delta}_{< n+S}\ge 1+(n+2)+(n+2)=2n+5.
    \end{equation}

    (b) If $S\ge n$, without loss of generality, we suppose $S$ is not constant. Otherwise, it reduces to the above arguments.
    By taking $a=n$ in Theorem \ref{thm-operator-comparison}, and noting that  $-n\ge -\inf S$, we obtain
    \begin{equation}
        N^J_{<-n}\ge N^J_{<-\inf S}=N^J_{<n-\inf q}\ge  N^{\Delta}_{\le n}\ge n+3.
    \end{equation}
    On the other hand, Lemma \ref{lem-Lap} implies that $-n$ is the eigenvalues of $J$ with multiplicity at least $n+2$,
    \begin{equation}
        N^J_{<0}\ge N^J_{<-n}+N^J_{=-n}\ge (n+3)+(n+2)=2n+5.
    \end{equation}
\end{proof}

\section{Index estimate of $r$-minimal hypersurfaces in a sphere}
From the view of the variations, the conception and many related results of minimal hypersurfaces in a sphere have been generalized to $r$-mimimal hypersurfaces.
Here we just give a quick review; the readers can refer \cite{Rei73, Ros93, BC97, CL07} and the references therein for more details.

A hypersurface in $\SSS$ is called \emph{$r$-minimal} if and only if its $(r+1)$-th mean curvature $H_{r+1}:=S_{r+1}\big/\binom{n}{r+1}$ vanishes, where $S_r$ is the $r$-th elementary symmetric function of the principal curvatures $k_1,\dots, k_n$, i.e.,
\begin{equation*}
    S_0=1,\ S_1=k_1+\dots+k_n,\ \cdots,\ S_n=k_1\cdots k_n.
\end{equation*}
The $r$-minimal hypersurfaces are the critical points of the functionals
\begin{equation}
    \mathcal{A}_r=\int_M F_r(S_1,\dots,S_r)\,dv,
\end{equation}
where $F_r$ are defined inductively by
\begin{equation}
    \begin{cases}
    F_0=1, F_1=S_1,\\
    F_r=S_r+\frac{n-r+1}{r-1}F_{r-2}, & \mbox{for $2\le r\le n-1$}.
    \end{cases}
\end{equation}

Let $T_r$ denote the $r$-th Newton transformation arising from the shape operator $A$, i.e.,
\begin{equation*}
    T_0=I, T_r=S_rI-AT_{r-1}.
\end{equation*}
We define the operator $L_r$ by
\begin{equation}
    L_r u:=\dive (T_r \nabla u),
\end{equation}
then the second variation formula (for $r$-minimal hypersurfaces) is given by
\begin{equation}\label{index-form-r}
    \mathcal{A}_r''(0)=-(r+1)\int_M u J_r u\,dv=:Q_r(u),
\end{equation}
where
\begin{equation}
    J_r=L_r + (n-r) S_r -(r+2)S_{r+2}.
\end{equation}
Similarly, we can also define the index of $r$-stability, denoted by $\Ind^r(M)$,
to be the index of the quadratic form $Q$ associated \eqref{index-form-r} with the second variation of $x$. In other words, $\Ind^r(M)$ is equal to the maximal dimension of a subspace of  $ C^\infty(M)$ on which $Q_r$ is negative definite, that is, the sum of the dimensions of the eigenspaces of $J_r$ belonging to its negative eigenvalues.

Theorem \ref{thm-app1-min} has been generalized to $r$-minimal hypersurfaces in $\SSS$ by Barros and Sousa in \cite{BS10}.
\begin{theorem}[{\cite[Theorems 1]{BS10}}]\label{thm-app-r-min}
    Given $r\in \{1,..., n-1\}$, let $x: M^n\rightarrow\SSS$ be an $r$-minimal hypersurface with $S_{r+2}<0$.
    Then $\Ind^r(M)\ge n+3$.
\end{theorem}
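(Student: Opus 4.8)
I would prove Theorem~\ref{thm-app-r-min} by running the argument behind Theorem~\ref{thm-main} (equivalently, part~(a) of Theorem~\ref{thm-app1-min}) with the Laplacian replaced by $L_r=\dive(T_r\nabla\,\cdot\,)$ and with a nonconstant weight. Concretely, in Theorem~\ref{thm-operator-comparison} I would take
\begin{equation*}
  L=L_r,\qquad \hat L=J_r=L_r+q,\qquad q=(n-r)S_r-(r+2)S_{r+2},\qquad p=S_r .
\end{equation*}
The first point to settle is that this is a legitimate instance of the comparison theorem, i.e. that $L_r$ (hence $J_r=L_r+q$) is a self-adjoint elliptic operator and that $p$ is a positive smooth function. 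Self-adjointness of $L_r$ is immediate from the symmetry of $T_r$. For ellipticity and positivity of $p$ I would invoke the classical fact (see e.g. \cite{BC97, BS10}) that for an $r$-minimal hypersurface with $S_{r+2}<0$ the Newton operator $T_r$ is positive definite; in particular $\operatorname{tr} T_r=(n-r)S_r>0$, so $S_r>0$ everywhere and $p=S_r$ is positive. Finally, since $p>0$, the Morse index of the quadratic form $Q_r(u)=-(r+1)\int_M uJ_ru\,dv$ is computed equally well by the $p$-weighted eigenvalue problem for $J_r$: if $J_ru_j=-\lambda_j p u_j$ with $\{u_j\}$ a $p$-orthonormal basis, then $Q_r\bigl(\sum_j c_ju_j\bigr)=(r+1)\sum_j\lambda_jc_j^2$, whence $\Ind^r(M)=N^{J_r,p}_{<0}$.

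Next I would exhibit $n+3$ small $p$-weighted eigenvalues of $L_r$. The needed analogue of Lemma~\ref{lem-Lap} is
\begin{equation*}
  L_r x^i=-(n-r)S_r\,x^i,\qquad 1\le i\le n+2,
\end{equation*}
for the coordinate functions of an $r$-minimal immersion. This follows from $\dive T_r=0$ (the Newton operators are divergence free in a space form), which gives $L_r u=\langle T_r,\nabla^2u\rangle$, together with the Gauss-type identity $\nabla^2 x^i=\nu^i\langle A\,\cdot\,,\cdot\,\rangle-x^i g$ and the trace formulas $\operatorname{tr} T_r=(n-r)S_r$, $\operatorname{tr}(T_rA)=(r+1)S_{r+1}=0$. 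Thus, with $p=S_r$, each $x^i$ is a $p$-weighted eigenfunction of $L_r$ with eigenvalue $n-r$, while the constant functions are $p$-weighted eigenfunctions with eigenvalue $0$. Since $x$ is full the functions $x^1,\dots,x^{n+2}$ are linearly independent, and (using $S_r>0$) none of them is constant, so
\begin{equation*}
  N^{L_r,p}_{\le n-r}\ \ge\ 1+(n+2)\ =\ n+3 .
\end{equation*}

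Finally I would apply Theorem~\ref{thm-operator-comparison} with $a=n-r$. Since $S_{r+2}<0<S_r$ we have
\begin{equation*}
  \frac{q}{p}=(n-r)+\frac{(r+2)(-S_{r+2})}{S_r}>n-r \qquad\text{on all of }M ,
\end{equation*}
so by compactness of $M$, $a_0:=\inf_M(q/p)>n-r=a$, and in particular $a-a_0<0$. If $q/p$ is not constant, part~(1) of Theorem~\ref{thm-operator-comparison} gives $N^{J_r,p}_{<a-a_0}\ge N^{L_r,p}_{\le n-r}\ge n+3$; since $a-a_0<0$ this yields $\Ind^r(M)=N^{J_r,p}_{<0}\ge N^{J_r,p}_{<a-a_0}\ge n+3$. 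If $q/p\equiv c$ is constant, part~(2) gives $N^{J_r,p}_{\le n-r-c}=N^{L_r,p}_{\le n-r}\ge n+3$, and since $n-r-c<0$ we again conclude $\Ind^r(M)\ge N^{J_r,p}_{\le n-r-c}\ge n+3$. This proves the theorem.

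The transcription of the minimal-case argument is routine; the step where I expect the real work to sit is the passage to the comparison theorem. That rests on the algebraic fact that $S_{r+1}=0$ together with $S_{r+2}<0$ forces $T_r$ to be positive definite (ellipticity of $L_r$) and $S_r>0$ (positivity of the weight $p$), together with the accompanying observation that the index of $Q_r$ may be read off from the $S_r$-weighted spectrum of $J_r$; once these are in place, everything else mirrors the proof of Theorem~\ref{thm-app1-min} verbatim.
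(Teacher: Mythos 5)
Your argument is correct, but it takes a genuinely different route from the paper's own proof of Theorem \ref{thm-app-r-min}. The paper stays in the unweighted setting $p\equiv 1$ and applies Theorem \ref{thm-operator-comparison} twice: first with $L=L_r+(n-r)S_r$ (whose kernel contains the coordinate functions by Lemma \ref{lem-Lap-2}) and $q=-(r+2)S_{r+2}>0$, which gives $N^{J_r}_{<0}\ge N^{L}_{\le 0}$ uniformly, whether $S_{r+2}$ is constant or not; then, comparing $L$ with $L_r$ via $q=(n-r)S_r$, it gets $N^{L}_{<0}\ge N^{L_r}_{\le 0}\ge 1$ from the constants, hence $N^{L}_{\le 0}\ge 1+(n+2)=n+3$. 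You instead make a single application with the weight $p=S_r$, taking $L=L_r$, $\hat L=J_r$, $a=n-r$ — exactly the device the paper reserves for Theorem \ref{thm-app-r-min2}. The supporting facts you use (that $S_{r+1}=0$ and $S_{r+2}<0$ force $S_r>0$ and ellipticity of $L_r$, which the paper attributes to Hounie--Leite \cite{HL99}; that $L_r x^i=-(n-r)S_r x^i$ together with the constants yields $N^{L_r,p}_{\le n-r}\ge n+3$; and that $\Ind^r(M)=N^{J_r,p}_{<0}$ since a positive weight does not change the sign of the Rayleigh quotient) all appear in the paper as well, so your proof is sound. What each approach buys: the paper's two-step unweighted version avoids any discussion of weighted indices and needs no case split on whether $q/p$ is constant, at the cost of the auxiliary operator $L_r+(n-r)S_r$ and a second comparison; your weighted one-shot version is shorter here and handles nonconstant $S_r$ directly, but requires the constant/nonconstant dichotomy and the (easy) weighted-index identification. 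One cosmetic point: fullness is not a hypothesis of Theorem \ref{thm-app-r-min}, but your appeal to it is harmless, since a linear relation among $x^1,\dots,x^{n+2}$ would place $x(M)$ in a totally geodesic hypersphere, forcing $A\equiv 0$ and contradicting $S_{r+2}<0$.
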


\begin{theorem}[{\cite[Theorems 3 and 4]{BS10}}]\label{thm-app-r-min2}
    Given $r\in \{1,..., n-1\}$, let $x: M^n\rightarrow\SSS$ be an $r$-minimal hypersurface.
    Suppose one of the following conditions holds:
\begin{enumerate}[label=(\alph*)]
    \item $S_{r+2}/S_r$ is a nonzero constant;
    \item  $-(r+2)S_{r+2}\ge (n-r)S_r$ and $S_r>0$.
\end{enumerate}
    Then
    \begin{enumerate}
    \item $\Ind^r(M)\ge n+3$;
    \item  $\Ind^r(M)$ is equal to $n+3$ if and only if
    $x(M)$ is one of (generalized) Clifford torus;
    \item  $\Ind^r(M)\ge 2n+5$ when $\Ind^r(M)>n+3$.
    \end{enumerate}
\end{theorem}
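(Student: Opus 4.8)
The plan is to rerun the argument of Theorem~\ref{thm-app1-min}, replacing the Laplacian by $L_r$ and working with the nontrivial weight $p=S_r$. Concretely, I would apply Theorem~\ref{thm-operator-comparison} with $L=L_r$, $\hat L=J_r=L_r+q$ where $q=\rho_r:=(n-r)S_r-(r+2)S_{r+2}$, and $p=S_r$. Under either hypothesis one has $S_r>0$: in case~(b) it is assumed outright, and in case~(a) the ratio $S_{r+2}/S_r$ being defined on the connected manifold $M$ forces $S_r$ to have constant sign, which must be positive for $L_r$ to be elliptic. Thus $p$ is an admissible weight. Moreover, since $S_r>0$ does not change the sign of the quadratic form $Q_r$ in \eqref{index-form-r}, the index of $r$-stability equals the number of negative $S_r$-weighted eigenvalues of $J_r$, i.e.\ $\Ind^r(M)=N^{J_r,S_r}_{<0}$, and it suffices to bound this number from below.

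Two classical ingredients feed the comparison. First, the $r$-minimal analogue of Lemma~\ref{lem-Lap} (see e.g.\ \cite{Ros93,BC97,CL07}): for an $r$-minimal immersion $x\colon M^n\to\SSS$ one has $L_r x^i=-(n-r)S_r\,x^i$ and $L_r\nu^i=(r+2)S_{r+2}\,\nu^i$, hence $J_r\nu^i=(n-r)S_r\,\nu^i$, while $L_r$ annihilates constants. Reading these as weighted eigenvalue equations $L_ru=-\lambda S_ru$ and $J_ru=-\mu S_ru$, one gets $0,n-r\in\Spec(L_r,S_r)$ with multiplicities $\ge1$ and $\ge n+2$ (the latter by fullness of $x$), so $N^{L_r,S_r}_{\le n-r}\ge n+3$; and $-(n-r)\in\Spec(J_r,S_r)$ with multiplicity $\ge n+2$ (using that the Gauss map of a full hypersurface is full), so $N^{J_r,S_r}_{=-(n-r)}\ge n+2$. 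Second, a sign input: Newton's inequality $H_{r+1}^2\ge H_rH_{r+2}$ together with $H_{r+1}=0$ gives $S_rS_{r+2}\le0$, hence $S_{r+2}\le0$ and $q/p=(n-r)-(r+2)S_{r+2}/S_r\ge n-r$ on $M$. In case~(a) this makes $q/p$ the constant $(n-r)-(r+2)c$ with $c:=S_{r+2}/S_r<0$, in particular strictly $>n-r$; in case~(b), the assumption $-(r+2)S_{r+2}\ge(n-r)S_r$ upgrades this to $q/p\ge2(n-r)$ pointwise.

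Next I would extract the estimates. For $\Ind^r(M)\ge n+3$, apply Theorem~\ref{thm-operator-comparison} with $a=n-r$: if $q/p$ is nonconstant, part~(1) gives $N^{J_r,S_r}_{<(n-r)-a_0}\ge N^{L_r,S_r}_{\le n-r}\ge n+3$ with $(n-r)-a_0\le0$; if $q/p$ is a constant $\kappa$ (necessarily $>n-r$, since $S_{r+2}/S_r$ is then a negative constant), part~(2) gives $N^{J_r,S_r}_{\le(n-r)-\kappa}=N^{L_r,S_r}_{\le n-r}\ge n+3$ with $(n-r)-\kappa<0$; in both cases $N^{J_r,S_r}_{<0}\ge n+3$. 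For the gap statement, suppose $\Ind^r(M)>n+3$. If $q/p$ is nonconstant we are necessarily in case~(b), where $a_0\ge2(n-r)$, so $(n-r)-a_0\le-(n-r)<0$ and part~(1) yields $N^{J_r,S_r}_{<-(n-r)}\ge n+3$; combining with $N^{J_r,S_r}_{=-(n-r)}\ge n+2$ and $-(n-r)<0$ gives $N^{J_r,S_r}_{<0}\ge(n+3)+(n+2)=2n+5$. If $q/p=\kappa$ is constant (case~(a), $c<0$), then $\nu^i$ is also an $S_r$-weighted eigenfunction of $L_r$, with eigenvalue $-(r+2)c$. When $-(r+2)c\neq n-r$, the spaces spanned by the $x^i$ and by the $\nu^i$ lie in two distinct eigenspaces of $L_r$, so $N^{L_r,S_r}_{\le\max\{n-r,\,-(r+2)c\}}\ge1+(n+2)+(n+2)=2n+5$; feeding $a=\max\{n-r,\,-(r+2)c\}$ into part~(2), for which the shift $a-\kappa$ equals $-(n-r)$ or $(r+2)c$ (both negative), one again obtains $N^{J_r,S_r}_{<0}\ge2n+5$.

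The one remaining possibility---and the hard part---is the boundary case $-(r+2)c=n-r$, i.e.\ $-(r+2)S_{r+2}=(n-r)S_r$ with $S_{r+2}/S_r$ constant. Here the $x^i$ and the $\nu^i$ share the single $L_r$-eigenvalue $n-r$, and (as already happens for a Clifford torus, where they span the same $(n+2)$-dimensional space) the comparison theorem alone cannot push the bound past $n+3$. To finish one must import the $r$-version of the rigidity (Lawson / Chern--do~Carmo--Kobayashi) classification: an $r$-minimal hypersurface with $S_{r+2}/S_r$ equal to this particular constant is a generalized Clifford torus $\mathbb S^{n_1}(a)\times\mathbb S^{n_2}(b)$, for which the principal curvatures are constant and a direct computation of the $S_r$-weighted spectrum of $J_r$ gives $\Ind^r(M)=n+3$; conversely a generalized Clifford torus is $r$-minimal with constant $S_i$'s and lands in case~(a) precisely at this boundary. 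Assembling these pieces gives $\Ind^r(M)\ge n+3$, with equality exactly for the generalized Clifford tori, and $\Ind^r(M)\ge2n+5$ in all other cases, which are statements~(1)--(3). Two routine points remain to be pinned down along the way: the ellipticity setup in case~(a) (hence $S_r>0$), and the fullness of the Gauss map of a full hypersurface, used to get multiplicity $n+2$ for the $\nu^i$.
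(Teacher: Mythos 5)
Your proposal is correct and follows essentially the same route as the paper: Theorem \ref{thm-operator-comparison} applied with $L=L_r$, $\hat L=J_r$, $q=(n-r)S_r-(r+2)S_{r+2}$ and weight $p=S_r$ (after observing the weighted and unweighted index counts agree), the weighted eigenvalue equations from Lemma \ref{lem-Lap-2}, Caminha's rigidity theorem to identify the equality case, and the known value $n+3$ for the $r$-index of the generalized Clifford torus. The only divergences are bookkeeping: the paper obtains conclusion (1) by citing Theorem \ref{thm-app-r-min}, applies the rigidity theorem on the whole range $-(r+2)S_{r+2}/S_r\le n-r$ rather than only at the boundary value $n-r$ (your extra sub-case $0<-(r+2)S_{r+2}/S_r<n-r$ is in fact vacuous by that rigidity), and settles your two deferred ``routine points'' (ellipticity of $L_r$ with $S_r>0$ under (a), and $S_{r+2}<0$) by citing Hounie--Leite and Caminha's Newton inequality $H_{r+1}^2\ge H_rH_{r+2}$ rather than by your slightly circular inline argument.
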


\begin{remark}
    Since $S_0=1>0$ and $2S_2=-S$, Theorem \ref{thm-app-r-min2} recovers Theorem \ref{thm-app1-min} when $r=0$.
\end{remark}

To provide new proofs of Theorems \ref{thm-app-r-min} and \ref{thm-app-r-min2} by using Theorem \ref{thm-operator-comparison}, let us first recall the following generalization of Lemma \ref{lem-Lap}.
\begin{lemma}[cf. \cite{Rei73, Ros93}]\label{lem-Lap-2}
    For any $r$-minimal immersion $x: M^n\to \SSS$, let $x^i$ and $\nu^i$ be the coordinate functions of  $x=(x^1,\cdots, x^{n+2})$ and $\nu=(\nu^1,\cdots, \nu^{n+2})$ respectively. Then for each $i$, $1\le i\le n+2$, we have
\begin{equation}
    \begin{cases}
        L_r x^i=-(n-r)S_r x^i, \\
        L_r \nu^i=(r+2)S_{r+2}\nu^i.
    \end{cases}
\end{equation}
\end{lemma}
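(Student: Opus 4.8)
The plan is to reduce everything to two classical facts: that the Newton tensors are divergence–free in a space form, and the standard algebraic trace identities $\trace(T_r)=(n-r)S_r$, $\trace(T_rA)=(r+1)S_{r+1}$, $\trace(T_rA^2)=S_1S_{r+1}-(r+2)S_{r+2}$ (cf. \cite{Rei73,Ros93}). First I would record that $\dive T_r=0$ on any hypersurface of $\SSS$: this follows from the Codazzi equation once the ambient curvature contributions, which telescope in a space form, cancel. Consequently, for every $f\in C^\infty(M)$,
\[
L_r f=\dive(T_r\nabla f)=\langle\dive T_r,\nabla f\rangle+\trace\bigl(T_r\circ\operatorname{Hess}^M f\bigr)=\trace\bigl(T_r\circ\operatorname{Hess}^M f\bigr),
\]
where the Hessian is viewed as a self-adjoint endomorphism via the metric.

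Next I would compute the Hessians of the coordinate functions using the Gauss--Weingarten equations of the chain $M^n\hookrightarrow\SSS\hookrightarrow\mathbb R^{n+2}$. Writing $x^i=\langle x,e_i\rangle$, $\nu^i=\langle\nu,e_i\rangle$ for a fixed basis $\{e_i\}$ of $\mathbb R^{n+2}$, and using $\bar\nabla_XY=\nabla_XY+\langle AX,Y\rangle\nu-\langle X,Y\rangle x$ and $\bar\nabla_X\nu=-AX$ (with $\bar\nabla$ the flat connection, $A$ the shape operator with respect to $\nu$, and $\langle x,\nu\rangle=\langle X,x\rangle=\langle X,\nu\rangle=0$), one gets, for tangent $X,Y$,
\[
\operatorname{Hess}^M x^i(X,Y)=\nu^i\langle AX,Y\rangle-x^i\langle X,Y\rangle,
\]
\[
\operatorname{Hess}^M\nu^i(X,Y)=-\langle(\nabla_XA)Y,e_i\rangle-\nu^i\langle A^2X,Y\rangle+x^i\langle AX,Y\rangle.
\]
Tracing against $T_r$ (which commutes with $A$), the first line immediately gives $L_rx^i=(r+1)S_{r+1}\nu^i-(n-r)S_rx^i$. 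For the second, the term $\sum_j(\nabla_{e_j}A)(T_re_j)$ equals $\dive(AT_r)$ since $\dive T_r=0$, and $\dive(AT_r)=\dive\bigl(S_{r+1}I-T_{r+1}\bigr)=\nabla S_{r+1}$; combined with the trace identity for $T_rA^2$ this yields $L_r\nu^i=-\langle\nabla S_{r+1},e_i\rangle-\bigl(S_1S_{r+1}-(r+2)S_{r+2}\bigr)\nu^i+(r+1)S_{r+1}x^i$.

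Finally, I would invoke the $r$-minimality hypothesis $H_{r+1}=0$, i.e.\ $S_{r+1}\equiv0$ (hence $\nabla S_{r+1}\equiv0$), which annihilates every term containing $S_{r+1}$ or $\nabla S_{r+1}$ and leaves exactly $L_rx^i=-(n-r)S_rx^i$ and $L_r\nu^i=(r+2)S_{r+2}\nu^i$. The only genuine work is bookkeeping --- establishing $\dive T_r=0$ in $\SSS$, the trace identities, and carefully handling the Codazzi term $(\nabla_XA)Y$ that appears in $\operatorname{Hess}^M\nu^i$ --- but all of this is classical, and, crucially, the potentially troublesome divergence term is precisely $\nabla S_{r+1}$, which vanishes under the hypothesis; everything else is direct substitution.
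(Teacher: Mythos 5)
Your computation is correct: the paper gives no proof of this lemma, citing \cite{Rei73, Ros93} instead, and your argument --- $\dive T_r=0$ in a space form, the trace identities $\trace(T_r)=(n-r)S_r$, $\trace(T_rA)=(r+1)S_{r+1}$, $\trace(T_rA^2)=S_1S_{r+1}-(r+2)S_{r+2}$, the Hessians of $x^i,\nu^i$ from the Gauss--Weingarten equations of $M\subset\SSS\subset\mathbb{R}^{n+2}$, and the observation that every remaining term carries a factor $S_{r+1}$ or $\nabla S_{r+1}$ and hence vanishes by $r$-minimality --- is precisely the classical derivation in those references. No gaps to report.
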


\begin{proof}[Proof of Theorem \ref{thm-app-r-min}]
    First of all, we point out that the conditions $r$-minimality (i.e. $S_{r+1}\equiv 0$) and $S_{r+2}<0$ imply that $S_r>0$ and $L_r$ is elliptic;
    see \cite{HL99} and the original proof of \cite[Theorems 1]{BS10} for more details.

    Take
    \begin{equation*}
        L=L_r+(n-r)S_r,\ q=-(r+2)S_{r+2},\ p\equiv 1 \mbox{ and } a=\inf q
    \end{equation*}
    in Theorem \ref{thm-operator-comparison}. Since $S_{r+2}<0$, we have $a>0$, and then
    \begin{equation}
        N^{J_r}_{ < 0}\ge N^L_{ < a} \ge N^L_{\le 0},
    \end{equation}
    whether $S_{r+2}$ is constant or not.

    Now take $q=(n-r)S_r$ and $a=\inf q$ in Theorem \ref{thm-operator-comparison} and treat $\{L, L_r\}$ as $\{\hat{L}, L\}$ respectively,
    we have
    \begin{equation}
        N^L_{ < 0}\ge N^{L_r}_{ < (n - r)\inf S_r}\ge N^{L_r}_{\le 0} \ge 1,
    \end{equation}
    where we used $S_r>0$ and the fact that $0$ is an eigenvalue of $L_r$.

    By Lemma \ref{lem-Lap-2}, $Lx^i=0$, which means $0$ is an eigenvalue of $L$ with multiplicity at least $n+2$.
    Hence,
    \begin{equation}
        N^{J_r}_{ < 0}\ge N^L_{\le 0} = N^L_{ <0} +N^L_{ = 0}\ge n + 3.
    \end{equation}
\end{proof}

\begin{remark}
    \
    \begin{enumerate}
    \item
    In the above proof, we used Theorem \ref{thm-operator-comparison} to prove $N^L_{\le 0}\ge n+3$.  This result can be obtained by using the fact that the first eigenvalue of $L$ is simple
    (so it must be less than $0$).
    \item
    When $r=0$, $S_1=0$ means $x(M)$ is minimal.
    $2S_2=-S<0$ also holds automatically if $x(M)$ is non-geodesic.
\end{enumerate}
\end{remark}

\begin{proof}[Proof of Theorem \ref{thm-app-r-min2}]
The conclusion (1) follows from Theorem \ref{thm-app-r-min}.
Indeed, under the condition (a), again by Hounie-Leite's result \cite{HL99} (see the original proof of \cite[Theorems 4]{BS10} for details),
we conclude that $S_r>0$ and $L_r$ is elliptic;
furthermore, $0=H_{r+1}^2\ge H_{r}H_{r+2}$ (cf. \cite[Proposition 1]{Cam06}) implies $S_{r+2}<0$.
Under the condition (b),
we have $S_{r+2}<0$ and then $L_r$ is elliptic.
% Hence, $\Ind^r(M)\ge n+3$ by Theorem \ref{thm-app-r-min}.

Next, we prove (2) and (3) based on (1).
It is essentially the same as the proof of Theorem \ref{thm-app1-min}.
However, different from the case of minimal hypersurfaces, $S_r$ may not be constant in this situation.
Fortunately, note that
    \begin{equation}
        -\frac{\int_M uLu\, dv}{\int_M u^2\, dv}<0 \iff -\frac{\int_M uLu\, dv}{\int_M u^2 p\, dv}<0,
    \end{equation}
    therefore, there are no differences between by using the weighted Rayleigh quotient and by using the usual Rayleigh quotient when estimating the index.
    This indicates us to take
    \begin{equation*}
        p=S_r>0,~L=L_r,~ \hat{L}=J_r=L+q
    \end{equation*}
    with $q=(n-r)S_r-(r+2)S_{r+2}$ in Theorem \ref{thm-operator-comparison}.

    Keep in mind $L_r x=-(n-r)S_r x, L_r \nu=(r+2)S_{r+2}\nu$ (Lemma \ref{lem-Lap-2}).

    \textbf{Case (a).} When $S_{r+2}/S_r$ is constant, $q/p$ is also constant and note that
    \begin{equation*}
    q/p=(n-r)-(r+2)S_{r+2}/S_r>(n-r).
    \end{equation*} Taking $a=q/p$, we have
    \begin{equation}
        N^{J_r,p}_{<0}=N^{L_r,p}_{<p/q}\ge  N^{L_r}_{\le n-r}\ge n+3.
    \end{equation}

    If $-(r+2)S_{r+2}/S_r\le n-r$, then $M$ is a generalized Clifford torus satisfying $S_{r+1}=0$ (by Caminha's rigidity theorem \cite[Theorem 4.5]{Cam06a}) and its $r$-index is just $n+3$ (\cite[Theorem 2]{BS10}). So the conclusion (2) is proven.

    If $-(r+2)S_{r+2}/S_r> n-r$, then  $n-r$ and $-(r+2)S_{r+2}/S_r$ are the different ($S_r$-weighted) eigenvalues of $L_r$ and both of them have multiplicities at least $n+2$. So we have
    \begin{equation}
        N^{J_r,p}_{<0}=N^{L_r,p}_{<p/q}\ge 1+(n+2)+(n+2)=2n+5.
    \end{equation}
    This is the conclusion (3).

    \textbf{Case (b).} When $-(r+2)S_{r+2}\ge (n-r)S_r$ and $S_r>0$.
    Suppose $S_{r+2}/S_r$ is not constant without loss of generality; otherwise, it reduces
    to Case (a).

    By taking $a=n-r$ and noting that
    \begin{equation*}
        -(n-r)\ge -(r+2)\inf (-S_{r+2}/S_r),
    \end{equation*}
    we obtain
    \begin{equation}
        N^{J_r,p}_{ < -(n-r)}\ge N^{J_r,p}_{ < -(r+2)\inf (-S_{r+2}/S_r)}=N^{J_r,p}_{ < (n-r)-\inf (q/p)}\ge N^{L_r,p}_{ \le n-r}\ge n+3.
    \end{equation}
    Finally, we conclude
    \begin{equation}
        N^{J_r,p}_{ \le -(n-r)}=N^{J_r,p}_{<-(n-r)}+N^{J_r,p}_{=-(n-r)}\ge 2n+5,
    \end{equation}
    since $N^{J_r,p}_{=-(n-r)}=n+2$.
    So the conclusion (3) is proven.
\end{proof}

\textbf{Acknowledgements}  The authors are thankful to Prof. Ying L\"{u} and Prof. Zhenxiao Xie for valuable discussions.

%\bibliographystyle{abbrv}
%\bibliography{refers}

\begin{thebibliography}{10}

\bibitem{AB22}
D.~Adauto and M.~Batista.
\newblock Index estimates for closed minimal submanifolds of the sphere.
\newblock {\em Proc. Roy. Soc. Edinburgh Sect. A}, 152(3):802--816, 2022.

\bibitem{ACS18}
L.~Ambrozio, A.~Carlotto, and B.~Sharp.
\newblock Comparing the {M}orse index and the first {B}etti number of minimal hypersurfaces.
\newblock {\em J. Differential Geom.}, 108(3):379--410, 2018.

\bibitem{Ban80}
C.~Bandle.
\newblock {\em Isoperimetric inequalities and applications}, volume~7 of {\em Monographs and Studies in Mathematics}.
\newblock Pitman (Advanced Publishing Program), Boston, Mass.-London, 1980.

\bibitem{BC97}
J.~a. L.~M. Barbosa and A.~G. Colares.
\newblock Stability of hypersurfaces with constant {$r$}-mean curvature.
\newblock {\em Ann. Global Anal. Geom.}, 15(3):277--297, 1997.

\bibitem{BS10}
A.~Barros and P.~Sousa.
\newblock Estimate for index of hypersurfaces in spheres with null higher order mean curvature.
\newblock {\em Monatsh. Math.}, 160(3):227--241, 2010.

\bibitem{Bre13}
S.~Brendle.
\newblock Minimal surfaces in {$S^3$}: a survey of recent results.
\newblock {\em Bull. Math. Sci.}, 3(1):133--171, 2013.

\bibitem{Cam06a}
A.~Caminha.
\newblock On hypersurfaces into {R}iemannian spaces of constant sectional curvature.
\newblock {\em Kodai Math. J.}, 29(2):185--210, 2006.

\bibitem{Cam06}
A.~Caminha.
\newblock On spacelike hypersurfaces of constant sectional curvature {L}orentz manifolds.
\newblock {\em J. Geom. Phys.}, 56(7):1144--1174, 2006.

\bibitem{CL07}
L.~Cao and H.~Li.
\newblock {$r$}-minimal submanifolds in space forms.
\newblock {\em Ann. Global Anal. Geom.}, 32(4):311--341, 2007.

\bibitem{CdCK70}
S.~S. Chern, M.~do~Carmo, and S.~Kobayashi.
\newblock Minimal submanifolds of a sphere with second fundamental form of constant length.
\newblock In {\em Functional {A}nalysis and {R}elated {F}ields ({P}roc. {C}onf. for {M}. {S}tone, {U}niv. {C}hicago, {C}hicago, {I}ll., 1968)}, pages 59--75. Springer, New York-Berlin, 1970.

\bibitem{CS09}
J.~Choe and M.~Soret.
\newblock First eigenvalue of symmetric minimal surfaces in {$\Bbb S^3$}.
\newblock {\em Indiana Univ. Math. J.}, 58(1):269--281, 2009.

\bibitem{CM11}
T.~H. Colding and W.~P. Minicozzi, II.
\newblock {\em A course in minimal surfaces}, volume 121 of {\em Graduate Studies in Mathematics}.
\newblock American Mathematical Society, Providence, RI, 2011.

\bibitem{dBS09}
A.~A. de~Barros and P.~A.~A. Sousa.
\newblock Estimate for index of closed minimal hypersurfaces in spheres.
\newblock {\em Kodai Math. J.}, 32(3):442--449, 2009.

\bibitem{Eji83}
N.~Ejiri.
\newblock The index of minimal immersions of {$S\sp{2}$} into {$S\sp{2n}$}.
\newblock {\em Math. Z.}, 184(1):127--132, 1983.

\bibitem{ESou93}
A.~El~Soufi.
\newblock Applications harmoniques, immersions minimales et transformations conformes de la sph\`ere.
\newblock {\em Compositio Math.}, 85(3):281--298, 1993.

\bibitem{ESI00}
A.~El~Soufi and S.~Ilias.
\newblock Riemannian manifolds admitting isometric immersions by their first eigenfunctions.
\newblock {\em Pacific J. Math.}, 195(1):91--99, 2000.

\bibitem{GBD99}
I.~Guadalupe, A.~Brasil, Jr., and J.~A. Delgado.
\newblock A characterization of the {C}lifford torus.
\newblock {\em Rend. Circ. Mat. Palermo (2)}, 48(3):537--540, 1999.

\bibitem{HL99}
J.~Hounie and M.~L. Leite.
\newblock Two-ended hypersurfaces with zero scalar curvature.
\newblock {\em Indiana Univ. Math. J.}, 48(3):867--882, 1999.

\bibitem{KW20}
N.~Kapouleas and D.~Wiygul.
\newblock The index and nullity of the {L}awson surfaces {$\xi_{g,1}$}.
\newblock {\em Camb. J. Math.}, 8(2):363--405, 2020.

\bibitem{Kar21}
M.~Karpukhin.
\newblock Index of minimal spheres and isoperimetric eigenvalue inequalities.
\newblock {\em Invent. Math.}, 223(1):335--377, 2021.

\bibitem{KW24}
R.~Kusner and P.~Wang.
\newblock On the index of minimal 2-tori in the 4-sphere.
\newblock {\em J. Reine Angew. Math.}, 806:9--22, 2024.

\bibitem{Law69}
H.~B. Lawson, Jr.
\newblock Local rigidity theorems for minimal hypersurfaces.
\newblock {\em Ann. of Math. (2)}, 89:187--197, 1969.

\bibitem{MN14a}
F.~C. Marques and A.~Neves.
\newblock Min-max theory and the {W}illmore conjecture.
\newblock {\em Ann. of Math. (2)}, 179(2):683--782, 2014.

\bibitem{MN14}
F.~C. Marques and A.~Neves.
\newblock The {W}illmore conjecture.
\newblock {\em Jahresber. Dtsch. Math.-Ver.}, 116(4):201--222, 2014.

\bibitem{MR86}
S.~Montiel and A.~Ros.
\newblock Minimal immersions of surfaces by the first eigenfunctions and conformal area.
\newblock {\em Invent. Math.}, 83(1):153--166, 1986.

\bibitem{Per01}
O.~Perdomo.
\newblock Low index minimal hypersurfaces of spheres.
\newblock {\em Asian J. Math.}, 5(4):741--749, 2001.

\bibitem{Rei73}
R.~C. Reilly.
\newblock Variational properties of functions of the mean curvatures for hypersurfaces in space forms.
\newblock {\em J. Differential Geometry}, 8:465--477, 1973.

\bibitem{Ros93}
H.~Rosenberg.
\newblock Hypersurfaces of constant curvature in space forms.
\newblock {\em Bull. Sci. Math.}, 117(2):211--239, 1993.

\bibitem{Sav10}
A.~Savo.
\newblock Index bounds for minimal hypersurfaces of the sphere.
\newblock {\em Indiana Univ. Math. J.}, 59(3):823--837, 2010.

\bibitem{Sim68}
J.~Simons.
\newblock Minimal varieties in riemannian manifolds.
\newblock {\em Ann. of Math. (2)}, 88:62--105, 1968.

\bibitem{TXY14}
Z.~Tang, Y.~Xie, and W.~Yan.
\newblock Isoparametric foliation and {Y}au conjecture on the first eigenvalue, {II}.
\newblock {\em J. Funct. Anal.}, 266(10):6174--6199, 2014.

\bibitem{TY13}
Z.~Tang and W.~Yan.
\newblock Isoparametric foliation and {Y}au conjecture on the first eigenvalue.
\newblock {\em J. Differential Geom.}, 94(3):521--540, 2013.

\bibitem{Urb90}
F.~Urbano.
\newblock Minimal surfaces with low index in the three-dimensional sphere.
\newblock {\em Proc. Amer. Math. Soc.}, 108(4):989--992, 1990.

\bibitem{WW23}
C.~Wang and P.~Wang.
\newblock The {M}orse index of minimal products of minimal submanifolds in spheres.
\newblock {\em Sci. China Math.}, 66(4):799--818, 2023.

\end{thebibliography}

\end{document}